\documentclass[reqno]{amsart}[12pt]
\usepackage{amsmath}
\usepackage{enumerate}
\usepackage{color}
\newtheorem{theorem}{Theorem}[section]
\newtheorem{lemma}[theorem]{Lemma}

\begin{document}
\title[\hfil Uniqueness of quasilinear equation]
{Uniqueness of positive solution for a quasilinear elliptic equation in Heisenberg Group}

\author[K. Bal]
{Kaushik Bal}  

\address{Kaushik Bal \newline
Department of Mathematics and Statistics\\
Indian Institute of Technology\\
Kanpur\\
Uttar Pradesh-280016 India}
\email{kaushik@iitk.ac.in}

\subjclass[2000]{35R03, 35J92,35J70}
\keywords{Quasilinear elliptic equation; Picone's identity; Heisenberg Group}

\begin{abstract}
 In this article we are interested in addressing the question of existence and uniqueness of positive solution to a quasilinear elliptic equation involving
 p-laplacian in Heisenberg Group. The idea is to prove the uniqueness by using Diaz-Sa\'a Inequality in Heisenberg 
 Group which we obtain via a generalized version of Picone's Identity.
 
\end{abstract}

\maketitle
\numberwithin{equation}{section}
\newtheorem{remark}[theorem]{Remark}
\allowdisplaybreaks

\section{Introduction}
In this article we will be dealing with the problem of existence and uniqueness of the p-sub-laplacian operator in Heisenberg Group.
We begin by recalling the well-known paper of H.Br\'ezis and L.Oswald \cite{BrOs}, where the necessary and sufficient condition for 
the existence and uniqueness of positive solutions were obtained for the equation:
\begin{equation}
 \nonumber
 -\Delta u=f(x,u)\;\text{in}\;\Omega, \;\;u=0\;\text{on}\;\partial\Omega
\end{equation}
when $\Omega$ is a bounded open domain in $\mathbb{R}^n$.\\
Almost immediately the result was extended in J.Di\'az and J.Sa\'a \cite{DiSa} to the p-laplacian case by introducing a new 
inequality which came to be later known as the D\'iaz-Sa\'a Inequality.

The purpose of this paper is to extend the result of \cite{DiSa} in the context of Heisenberg Group, which we will denote by 
$\mathcal{H}^n$.\\
Consider the problem:
\begin{align}\label{QH}
 -\Delta_p u&=f(x,u)\;\text{in}\;\Omega\nonumber\\
u\geq 0\;&\text{and}\;u\not\equiv 0\;\text{in}\;\Omega\\
 u&=0\;\text{on}\;\partial\Omega\nonumber
\end{align}
where $\Omega$ is an open bounded domain of $\mathcal{H}^n$ and $1<p<\infty$.\\
We consider $f:\Omega\times[0,\infty)\to (0,\infty)$ satisfying the following hypothesis: 
\begin{enumerate}[I]
 \item The function $r\rightarrow f(x,r)$ is continuous on $[0,\infty)$ for a.e $x\in\Omega$ and for every $r\geq0$, the function
 $x\rightarrow f(x,r)$ is in $L^{\infty}(\Omega)$.
 \item The function $r\rightarrow \frac{f(x,r)}{r^{p-1}}$ is strictly decreasing on $(0,\infty)$ for a.e $x\in\Omega$.
 \item $\exists\;\;C>0$ s.t $f(x,r)\leq C(r^{p-1}+1)$ for a.e $x\in\Omega$ and for all $r$.
\end{enumerate}

We aim to establish the existence and uniqueness of the weak solution to (\ref{QH}).

Before we start with our results let us briefly recall some basic notions regarding the Heisenberg Group $\mathcal{H}^n$ along 
with some literature which is available on the study of Elliptic Equation on Heisenberg Group.

The Heisenberg Group $\mathcal{H}^n=(\mathcal{R}^{2n+1},\cdot)$ is Nilpotent Lie Group endowed with the group structure:
\begin{equation}\nonumber
 (x,y,t)\cdot(x',y',t')=(x+x',y+y',t+t'+2(\langle y,x'\rangle-\langle x,y'\rangle))
\end{equation}
where $x,y,x',y'\in \mathcal{R}^n,\;t,t'\in\mathcal{R}$ and $\langle,\rangle$ denotes the standard inner product in $\mathcal{R}^n$.

The left invariant vector field generating the Lie algebra is given by
\begin{equation}
 \nonumber
 \mathcal{T}=\frac{\partial}{\partial t},\,\mathcal{X}_i=\frac{\partial}{\partial x_i}+2y_i\frac{\partial}{\partial t},\,
 \mathcal{Y}_i=\frac{\partial}{\partial y_i}-2x_i\frac{\partial}{\partial t},\,i=1,2,\cdot,\cdot,n.
\end{equation}
and satisfy the following relationship:
\begin{equation}\nonumber
 [\mathcal{X}_i,\mathcal{Y}_i]=-4\delta_{ij}T,\;\;[\mathcal{X}_i,\mathcal{X}_j]=[\mathcal{Y}_i,\mathcal{Y}_j]=
 [\mathcal{X}_i,\mathcal{T}]=[\mathcal{Y}_i,\mathcal{T}]=0.
\end{equation}

The generalized gradient is given by $\nabla_H=(\mathcal{X}_1,\mathcal{X}_2,\cdot,\cdot,\mathcal{X}_n,\mathcal{Y}_1,\mathcal{Y}_2,
\cdot,\cdot,\mathcal{Y}_n)$.\\
Hence the sub-laplacian $\Delta_H$ and the p-sub-Laplacian $\Delta_{H,p}$ are denoted by

\begin{align*}
 \Delta_H=\sum\limits_{i=1}^{n}\mathcal{X}_i^2+\mathcal{Y}_i^2=\nabla_H\cdot\nabla_H,\;\mbox{and}\\
 \Delta_{H,p}=\nabla_H(|\nabla_H |^{p-2}\nabla_H ),\;p>1
\end{align*}
We also denote the space $D^{1,p}(\Omega)$ and $D^{1,p}_0(\Omega)$ as
$\{u:\Omega\rightarrow \mathcal{R};u,|\nabla_H u|\in L^p(\Omega)\}$ and the closure of $C^{\infty}_0(\Omega)$ with respect to the norm
$||u||_{D^{1,p}_0(\Omega)}=(\int_{\Omega}|\nabla_H u|^p dxdydt)^{\frac{1}{p}}$
respectively.

For more details about Heisenberg Group the reader may consult \cite{CDPT}.

Some results on the Laplacian and the p-Laplacian has been generalized to the Heisenberg Group with various degree of success.
Consider the following problem:
\begin{equation}\label{NIU}
\begin{array}{l}
-\Delta_{H,p} u=f(u)\;\text{in}\;\Omega\\
 u=0\;\text{on}\;\partial\Omega
 \end{array}
\end{equation}
Some of the very first results obtained regarding the above problem for $p=2$ is by Garofalo-Lanconelli \cite{GL}, where
existence and nonexistence results were derived using integral identities of Rellich-Pohozaev type. In Birindelli et al \cite{BDC},
Liouville theorems for semilinear equations are proved. One can also find monotonicity and symmetry results in Birindelli and Prajapat \cite{BP}. 
As for the p-sub-Laplacian case, Niu et al \cite{NZW} considered the question of non-uniqueness of the (\ref{NIU})
using the Picone Identity and the Pohozaev Identities for the p-sub-Laplacian on Heisenberg Group. Results on p-sub-Laplacian
involving singular indefinite weight can be found in J.Dou \cite{DOU} and J.Tyagi \cite{TY} and the reference therein.

One of the biggest problem when dealing with p-sub-Laplacian is the non-availability of the $C^{1,\alpha}$ regularity, although
it has been proved in Marchi \cite{Marchi} to exist for $p$ near $2$.
It is worth mentioning that the methods of D\'iaz-S\'aa \cite{DiSa} can't be directly applied here due to the non-availability
of $C^{1,\alpha}$ regularity in the Heisenberg Group. In this work we bypass that problem by using a generalized 
version of D\'iaz-S\'aa Inequality in Heisenberg Group. 
\section{Preliminary Results}
We start this section with the generalized Picone's Identity for $p$-sub-Laplacian in Heisenberg Group, which is extension of the main
result in Euclidean space obtained in \cite{KB}.\\
In what follows we will assume $g:(0,\infty)\rightarrow(0,\infty)$ is a locally Lipchitz function that satisfies the differential inequality:
 \begin{equation}\label{PC}
  g'(x)\geq (p-1)[g(x)]^{\frac{p-2}{p-1}}\;\text{a.e}\;\;\text{in}\;\;(0,\infty)
 \end{equation}
 
 \begin{remark}
Example of functions satisfying (\ref{PC}) is $g(x)=x^{p-1}$ (where the equality holds) and $e^{(p-1)x}$.  
\end{remark}

In what follows we will use $\nabla$ to denote $\nabla_H$ and $\Delta_p$ to denote $\Delta_{H,p}$.

\begin{theorem}(\textbf{Generalized Picone Identity})\label{PICN}
 Let $1<p<\infty$ and $\Omega$ be any domain in $\mathcal{H}^n$. Let $u$ and $v$ be differentiable functions on $\Omega$
 with $v>0$ a.e in $\Omega$. Also assume $g$ satisfies (\ref{PC}).
 Define
 \begin{gather*}
 L(u,v)=|\nabla u|^p-p\frac{|u|^{p-2}u}{g(v)}\nabla u\cdot\nabla v|\nabla v|^{p-2}
 +\frac{g'(v)|u|^p}{[g(v)]^2}|\nabla v|^p\;\;\text{a.e}\;\text{in}\;\Omega.\\
 R(u,v)=|\nabla u|^p-\nabla(\frac{|u|^p}{g(v)})|\nabla v|^{p-2}\nabla v\;\;\text{a.e}\;\text{in}\;\Omega.
 \end{gather*}
Then $L(u,v)=R(u,v)\geq0$. Moreover $L(u,v)=0$ a.e. in $\Omega$ 
if and only if $\nabla (\frac{u}{v})=0$ a.e. in $\Omega$.
\end{theorem}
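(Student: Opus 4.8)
The plan is to prove the identity $L(u,v) = R(u,v)$ by direct algebraic expansion, and then to establish the inequality $L(u,v) \geq 0$ by recognizing the expression as a difference that can be controlled via Young's inequality together with the differential inequality \eqref{PC}. The equality part of the identity is purely computational: I would expand $R(u,v)$ using the product and quotient rules for the horizontal gradient $\nabla = \nabla_H$. Since the vector fields $\mathcal{X}_i, \mathcal{Y}_i$ act as first-order derivations, they obey the ordinary Leibniz and quotient rules, so
\begin{equation}\nonumber
\nabla\Bigl(\frac{|u|^p}{g(v)}\Bigr) = \frac{p|u|^{p-2}u\,\nabla u}{g(v)} - \frac{|u|^p\,g'(v)\,\nabla v}{[g(v)]^2}.
\end{equation}
Substituting this into the definition of $R(u,v)$ and taking the dot product with $|\nabla v|^{p-2}\nabla v$ reproduces exactly the three terms defining $L(u,v)$, using that $\nabla v \cdot \nabla v\,|\nabla v|^{p-2} = |\nabla v|^p$. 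This handles $L(u,v) = R(u,v)$ with no reliance on the group structure beyond the derivation property of the vector fields.

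The nonnegativity $L(u,v) \geq 0$ is the heart of the matter. The first step is to apply Young's inequality to the cross term. Writing $a = |\nabla u|$ and $b = \frac{|u|}{g(v)}|\nabla v|$, the middle term is bounded by
\begin{equation}\nonumber
p\,\frac{|u|^{p-2}u}{g(v)}\nabla u\cdot\nabla v\,|\nabla v|^{p-2} \leq p\,|\nabla u|\Bigl(\frac{|u|}{g(v)}|\nabla v|\Bigr)^{p-1}\cdot\frac{|u|}{|u|^{p-1}}\cdot\ldots
\end{equation}
so the right grouping is to pair $|\nabla u|$ against $\frac{|u|^{p-1}}{[g(v)]^{p-1}}|\nabla v|^{p-1}$ via Young's inequality with exponents $p$ and $\frac{p}{p-1}$. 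This produces the bound
\begin{equation}\nonumber
p\,\frac{|u|^{p-1}}{g(v)}|\nabla u||\nabla v|^{p-1} \leq |\nabla u|^p + (p-1)\frac{|u|^p}{[g(v)]^{\frac{p}{p-1}}}|\nabla v|^p.
\end{equation}
The key observation is that the coefficient of the last term of $L(u,v)$ is $\frac{g'(v)}{[g(v)]^2}$, while Young's inequality contributes $(p-1)[g(v)]^{-\frac{p}{p-1}}$. Since \eqref{PC} gives $g'(v) \geq (p-1)[g(v)]^{\frac{p-2}{p-1}}$, dividing by $[g(v)]^2$ yields $\frac{g'(v)}{[g(v)]^2} \geq (p-1)[g(v)]^{\frac{p-2}{p-1}-2} = (p-1)[g(v)]^{-\frac{p}{p-1}}$, which is exactly what is needed to absorb the Young term and conclude $L(u,v) \geq 0$.

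The main obstacle, and the part requiring genuine care, is the equality case. I expect that the condition for equality in Young's inequality forces $|\nabla u|$ and $\frac{|u|}{g(v)}|\nabla v|$ to be proportional, while equality in \eqref{PC} must hold along the relevant level sets; teasing these two conditions apart and showing they are jointly equivalent to $\nabla(u/v) = 0$ a.e. is delicate. One direction is straightforward: if $\nabla(u/v) = 0$ then $\nabla u = \frac{u}{v}\nabla v$ a.e., and substituting this directly into $L(u,v)$ should collapse all terms to zero, at least where $g(v)$ matches the structure; I would verify this by direct substitution, likely needing the borderline case $g(v) = v^{p-1}$ to make the cancellation exact. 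For the converse, vanishing of $L(u,v)$ forces equality in Young's inequality pointwise a.e., giving $|\nabla u|\,g(v) = |u|\,|\nabla v|$ with matched directions, from which I would reconstruct $\nabla(u/v) = 0$ by expanding $\nabla(u/v) = \frac{v\nabla u - u\nabla v}{v^2}$ and checking both magnitude and direction vanish. The subtlety is handling the set where $u = 0$ and ensuring the a.e.\ statements are consistent across the domain.
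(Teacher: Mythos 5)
Your proposal is correct and follows essentially the same route as the paper's own proof: the identical quotient-rule expansion giving $L(u,v)=R(u,v)$, the same pairing of $|\nabla u|$ against $\frac{|u|^{p-1}}{g(v)}|\nabla v|^{p-1}$ in Young's inequality with exponents $p$ and $\frac{p}{p-1}$, absorption of the resulting term via \eqref{PC}, and the same treatment of the equality case (simultaneous equality in Cauchy--Schwarz, Young, and \eqref{PC}, splitting on the set where $u\nabla v=0$, with the reduction to the borderline $g(x)=x^{p-1}$ that you correctly anticipate the paper also needs). One minor slip to fix when writing it up: equality in Young yields $|\nabla u|\,[g(v)]^{\frac{1}{p-1}}=|u|\,|\nabla v|$, not $|\nabla u|\,g(v)=|u|\,|\nabla v|$, though this does not change the argument since it coincides with your relation in the case $g(v)=v^{p-1}$.
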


\begin{remark}
Note that there is no restriction on the sign of $u$, as one can find in Proposition 3 of \cite{Ch}. When $g(x)=x^{p-1}$ and $u\geq 0$, 
we get back Lemma 2.1 (i.e, Picone Identity) of \cite{DOU}.
\end{remark}

\begin{proof}[Proof of Theorem \ref{PICN}]
Expanding $\nabla(\frac{|u|^p}{g(v)})$ we have,
\begin{align*}
\nabla(\frac{|u|^p}{g(v)})=\frac{pg(v)|u|^{p-2}u\nabla u-g'(v)|u|^p\nabla v}{[g(v)]^2}\\
=p\frac{|u|^{p-2}u\nabla u}{g(v)}-\frac{g'(v)|u|^p\nabla v}{[g(v)]^2}.
\end{align*}
Plugging it in $R(u,v)$ we have $R(u,v)=L(u,v)$.\\
To show positivity of $L(u,v)$ we proceed as follows,
\begin{equation}\label{mod}
 \frac{|u|^{p-2}u}{g(v)}\nabla u.\nabla v|\nabla v|^{p-2}\leq \frac{|u|^{p-1}}{g(v)}|\nabla v|^{p-1}|\nabla u|
\end{equation}
and by Young's Inequality we have,
\begin{equation}\label{young}
 p\frac{|u|^{p-1}}{g(v)}|\nabla v|^{p-1}|\nabla u|\leq |\nabla u|^p+(p-1)\frac{|u|^p|\nabla v|^p}{[g(v)]^{\frac{p}{p-1}}}
\end{equation}

Using (\ref{mod}) and (\ref{young}) we have,
\begin{align*}
 L(u,v)&\geq -(p-1)\frac{|u|^p|\nabla v|^p}{[g(v)]^{\frac{p}{p-1}}}+\frac{g'(v)|u|^p}{[g(v)]^2}|\nabla v|^p
\end{align*}
Now since $g$ satisfies (\ref{PC}) i.e, $g'(x)\geq (p-1)[g(x)]^{\frac{p-2}{p-1}}$ we have, $L(u,v)\geq0$.

Equality holds when the following occurs simultaneously:
\begin{align}
g'(x)&= (p-1)[g(x)]^{\frac{p-2}{p-1}}\label{PEC}\\
\frac{|u|^{p-2}u}{g(v)}\nabla u.\nabla v|\nabla v|^{p-2}&= \frac{|u|^{p-1}}{g(v)}|\nabla v|^{p-1}|\nabla u|.
\end{align}
and,
\begin{equation}\label{hi}
|\nabla u|=\frac{|u\nabla v|}{{g(v)}^{\frac{1}{p-1}}}
\end{equation}

Set, 
\begin{equation}\nonumber
 \mathcal{X}=\{x\in\Omega:\frac{|u\nabla v|}{{g(v)}^{\frac{1}{p-1}}}=0\}
\end{equation}

By equation (\ref{hi}) we have,

\begin{equation}\label{ko}
 \frac{|u\nabla v|}{{g(v)}^{\frac{1}{p-1}}}=|\nabla u|=0\;\;\text{a.e}\;\;\text{on}\;\;\mathcal{X}.
\end{equation}

from (\ref{ko}) and (\ref{PEC}) we have for $g(x)=x^{p-1}$,
\begin{equation}\label{pii}
 \frac{u}{v}\nabla v=\nabla u=0\;\; \text{a.e}\;\; \text{on}\;\; \mathcal{X}.
\end{equation}

On $\mathcal{X}^c$, let
\begin{equation}\nonumber
 w=\frac{|\nabla u|[g(v)]^{\frac{1}{p-1}}}{|u\nabla v|}
\end{equation}

Hence from the fact that $L(u,v)=0$ a.e in $\Omega$ we have,
\begin{equation}\label{odd}\nonumber
 w^p-pw+p-1=0
\end{equation}

which holds iff $w=1$.\\
Again taking into account (\ref{PEC}) is true for $g(x)=x^{p-1}$ we have,
\begin{equation}\label{pi}
 \nabla u\cdot\big(\nabla u-\nabla v\frac{u}{v})=0\;\;\text{a.e}\;\;\text{in}\;\;\mathcal{X}^c
\end{equation}
Combining (\ref{pii}) and (\ref{pi}) we can easily conclude that $L(u,v)=0$ iff $\nabla\big(\frac{u}{v})=0$ a.e in $\Omega$.
\end{proof}

With the generalized Picone's Identity in our hands we can now proceed to prove the Picone's Inequality which is the vital
ingredient for the proof of D\'iaz-Sa\'a Inequality. We will present a non-linear version of the Inequality and will closely follow 
the proof of Abdellaoui-Peral \cite{AbPe}.

\begin{theorem}(\textbf{Generalized Picone Inequality})\label{PIn}
Let $p>1$ and $\Omega$ be a bounded domain in $\mathcal{H}^n$. If $u,v\in D^{1,p}_0(\Omega)$ s.t $-\Delta_p v=\mu$ where $\mu$ is a positive bounded Radon 
measure with $v|_{\partial\Omega}=0$, $v(\not\equiv 0)\geq0$ and $g$ satisfies (\ref{PC}). Then we have,
\begin{equation}
 \nonumber
 \int_{\Omega} |\nabla u|^p\geq \int_{\Omega}\Big(\frac{|u|^p}{g(v)}\Big)(-\Delta_p v).
\end{equation}
\end{theorem}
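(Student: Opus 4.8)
The plan is to deduce the Generalized Picone Inequality directly from the Generalized Picone Identity of Theorem \ref{PICN}, which already furnishes the pointwise nonnegativity $L(u,v)=R(u,v)\geq 0$. The natural strategy is to integrate the identity $R(u,v)\geq 0$ over $\Omega$ and recognize that the integral of $R(u,v)$ is exactly the difference $\int_\Omega |\nabla u|^p - \int_\Omega \bigl(\frac{|u|^p}{g(v)}\bigr)(-\Delta_p v)$, after one integration by parts. Concretely, the second term of $R$ is $\nabla\bigl(\frac{|u|^p}{g(v)}\bigr)\cdot |\nabla v|^{p-2}\nabla v$, and integrating this against the test function $\frac{|u|^p}{g(v)}$ should reproduce $\int_\Omega \frac{|u|^p}{g(v)}(-\Delta_p v)$ by the weak formulation of the equation $-\Delta_p v=\mu$. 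So the skeleton is: (i) write the pointwise identity, (ii) integrate, (iii) identify the boundary-free integration-by-parts term with the measure pairing, and (iv) conclude via $R\geq 0$.

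First I would make sense of the right-hand side. Since $-\Delta_p v=\mu$ is merely a positive bounded Radon measure and $v$ is only in $D^{1,p}_0(\Omega)$, the quantity $\int_\Omega \frac{|u|^p}{g(v)}(-\Delta_p v)$ should be interpreted as $\int_\Omega \frac{|u|^p}{g(v)}\,d\mu$, and the key analytic fact is that $\frac{|u|^p}{g(v)}$ is an admissible test function. Following Abdellaoui--Peral, I would establish that $\varphi:=\frac{|u|^p}{g(v)}\in D^{1,p}_0(\Omega)$ (or can be approximated by such functions), so that the weak formulation $\int_\Omega |\nabla v|^{p-2}\nabla v\cdot\nabla\varphi = \int_\Omega \varphi\,d\mu$ is valid. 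Granting this, integrating the identity $R(u,v)\geq 0$ gives
\begin{equation}
\nonumber
\int_\Omega |\nabla u|^p \;\geq\; \int_\Omega |\nabla v|^{p-2}\nabla v\cdot\nabla\Bigl(\frac{|u|^p}{g(v)}\Bigr) \;=\; \int_\Omega \frac{|u|^p}{g(v)}\,d\mu,
\end{equation}
which is precisely the claimed inequality.

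The main obstacle is the regularity and admissibility of the test function $\frac{|u|^p}{g(v)}$, together with justifying the integration by parts against a measure. Because $v$ may vanish on a set of positive measure or approach zero near $\partial\Omega$, the quotient $\frac{|u|^p}{g(v)}$ could be singular, and one cannot simply plug it into the weak formulation. The standard remedy, which I would follow, is a truncation and approximation argument: replace $v$ by $v_\varepsilon = v+\varepsilon$ (or truncate $u$), work with the regularized test functions $\frac{|u|^p}{g(v+\varepsilon)}$ which are bounded and lie in $D^{1,p}_0(\Omega)$, apply the already-proven pointwise identity and the weak formulation there, and then pass to the limit $\varepsilon\to 0$. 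The passage to the limit requires monotone or dominated convergence on the left using Fatou and control of $\nabla\bigl(\frac{|u|^p}{g(v+\varepsilon)}\bigr)$ via the hypothesis (\ref{PC}) on $g$; this is where the bulk of the technical care lies. Once the limiting test function is shown to be admissible and the measure pairing $\int_\Omega \frac{|u|^p}{g(v)}\,d\mu$ is finite, the inequality follows immediately from $R(u,v)\geq 0$.
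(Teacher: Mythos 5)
Your proposal takes essentially the same route as the paper's proof (which follows Abdellaoui--Peral): the paper likewise shifts $v_n=v+\tfrac{1}{n}$ so that $\Delta_p v_n=\Delta_p v$, first proves the inequality for $v$ bounded away from zero and $u\in C_c^{\infty}(\Omega)$ via the integrated Picone identity and a Meyers--Serrin smooth approximation of $v$ (Lemma \ref{p1}), and then passes to the limit along $u_n\to u$ in $D^{1,p}_0(\Omega)$ using Fatou's Lemma. The one ingredient you leave implicit is how to guarantee $v>0$ a.e.\ so that the limiting quotient $\frac{|u|^p}{g(v)}$ is well defined: the paper obtains this at the outset from the Strong Maximum Principle (Lemma \ref{smp}), exploiting the positivity of $\mu$.
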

\begin{remark}
 When $g(u)=u^{p-1}$, we get Picone's Inequality in Heisenberg Group in Dou \cite{DOU}. 
\end{remark}

Before we proceed with the proof of our theorem we need the following lemma:
\begin{lemma}\label{p1}
 Let $p>1$ and $\Omega$ be any domain in $\mathcal{H}^n$ and let $v\in D^{1,p}(\Omega)$ be such that $v\geq\delta>0$. Then for all 
 $u\in C_c^{\infty}(\Omega)$ we have,
\begin{equation}\nonumber
 \int_{\Omega}|\nabla u|^p\geq \int_{\Omega}\Big(\frac{|u|^p}{g(v)}\Big)(-\Delta_p v).
\end{equation}
\end{lemma}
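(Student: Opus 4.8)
The plan is to prove Lemma \ref{p1} by combining the pointwise Generalized Picone Identity (Theorem \ref{PICN}) with an integration-by-parts argument that is legitimate because $u \in C_c^\infty(\Omega)$. Since $v \geq \delta > 0$ on $\Omega$, the expressions $g(v)$, $[g(v)]^2$ and $\nabla v$ appearing in $L(u,v)$ and $R(u,v)$ are all well-defined and locally integrable, so Theorem \ref{PICN} applies and gives $L(u,v) = R(u,v) \geq 0$ pointwise a.e. in $\Omega$. Integrating the inequality $R(u,v) \geq 0$ over $\Omega$ yields
\begin{equation}\nonumber
\int_\Omega |\nabla u|^p \geq \int_\Omega \nabla\Big(\frac{|u|^p}{g(v)}\Big)\cdot |\nabla v|^{p-2}\nabla v.
\end{equation}

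First I would justify that the right-hand side equals $\int_\Omega \big(\tfrac{|u|^p}{g(v)}\big)(-\Delta_p v)$ via integration by parts (the divergence theorem for the horizontal gradient on $\mathcal{H}^n$). The key point enabling this is that the test function $\varphi = |u|^p / g(v)$ is admissible: because $u$ has compact support in $\Omega$ and $v \geq \delta > 0$ with $g$ locally Lipschitz and strictly positive, $\varphi$ belongs to $D^{1,p}_0(\Omega)$ (indeed it has compact support and bounded weak horizontal gradient), so it pairs correctly against $|\nabla v|^{p-2}\nabla v \in L^{p/(p-1)}(\Omega)$. Integrating by parts moves the horizontal divergence onto the vector field $|\nabla v|^{p-2}\nabla v$, producing exactly $-\Delta_p v$ tested against $\varphi$, with no boundary term since $u$ has compact support. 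This converts the geometric inequality into the desired analytic one.

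The main obstacle I anticipate is the rigorous justification of this integration by parts in the sub-Riemannian setting, where one must be careful about the integrability of $|\nabla v|^{p-2}\nabla v$ (which lies in $L^{p'}(\Omega)$ since $v \in D^{1,p}(\Omega)$) and about the validity of the divergence theorem for horizontal vector fields against $C_c^\infty$-based test functions. The left-invariant vector fields $\mathcal{X}_i, \mathcal{Y}_i$ satisfy the usual integration-by-parts formula $\int_\Omega (\mathcal{X}_i \phi)\,\psi = -\int_\Omega \phi\,(\mathcal{X}_i \psi)$ for $\phi, \psi$ with suitable regularity and compact support, because these vector fields are divergence-free with respect to the Lebesgue measure $dx\,dy\,dt$; this is precisely what makes $\Delta_p$ the distributional adjoint pairing we need, and it is the technical heart of the argument.

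Putting these together completes the proof: pointwise positivity of $R(u,v)$ from Theorem \ref{PICN}, integration over $\Omega$, and the integration-by-parts identity identifying the right-hand side with $\int_\Omega \big(\tfrac{|u|^p}{g(v)}\big)(-\Delta_p v)$, give the claimed inequality
\begin{equation}\nonumber
\int_\Omega |\nabla u|^p \geq \int_\Omega \Big(\frac{|u|^p}{g(v)}\Big)(-\Delta_p v).
\end{equation}
This lemma then serves as the building block for Theorem \ref{PIn}, where the restriction $v \geq \delta > 0$ and $u \in C_c^\infty(\Omega)$ will be relaxed by a density and truncation argument following Abdellaoui--Peral \cite{AbPe}.
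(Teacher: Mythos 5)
Your proposal has a genuine gap at its very first step: you invoke Theorem \ref{PICN} directly for the pair $(u,v)$, but that theorem is stated for \emph{differentiable} functions, while here $v$ is only an element of $D^{1,p}(\Omega)$. The pointwise identity $L(u,v)=R(u,v)$ rests on the chain rule $\nabla g(v)=g'(v)\nabla v$, which for a merely locally Lipschitz $g$ and a possibly unbounded Sobolev function $v$ is not automatic and must in any case be proved, not cited. Relatedly, your claim that $\varphi=|u|^p/g(v)$ has a \emph{bounded} weak horizontal gradient is false in general: $\nabla\varphi$ contains the term $g'(v)|u|^p\nabla v/[g(v)]^2$, where $\nabla v$ lies only in $L^p$ and $g'(v)$ is only locally bounded (the hypothesis (\ref{PC}) bounds $g'$ from \emph{below}, not above, and $v\geq\delta$ gives no upper bound on $v$). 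So the admissibility of $\varphi$ as a $D^{1,p}_0(\Omega)$ test function --- which is what makes your ``integration by parts'' meaningful, since for a general $v\in D^{1,p}(\Omega)$ the pairing with $-\Delta_p v\in D^{-1,p'}(\Omega)$ is \emph{defined} by $\int_\Omega|\nabla v|^{p-2}\nabla v\cdot\nabla\varphi$ --- is exactly what you have not established. For the model nonlinearities $g(x)=x^{p-1}$ and $g(x)=e^{(p-1)x}$ one can check that $g'/g^2$ is bounded on $[\delta,\infty)$ and rescue $\nabla\varphi\in L^p$, but the lemma is asserted for every $g$ satisfying (\ref{PC}).

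The paper avoids both problems by approximation rather than by a direct pointwise argument: by Meyers--Serrin it picks $v_n\in C^1(\Omega)$ with $v_n>\delta/2$, $v_n\to v$ in $D^{1,p}(\Omega)$ and a.e.; Theorem \ref{PICN} then applies legitimately to the differentiable pair $(u,v_n)$, integration by parts against the smooth $v_n$ gives $\int_\Omega|\nabla u|^p\geq\int_\Omega\frac{|u|^p}{g(v_n)}(-\Delta_p v_n)$, and the limit is taken using the continuity of $-\Delta_p:D^{1,p}(\Omega)\to D^{-1,p'}(\Omega)$, the pointwise convergence $g(v_n)\to g(v)$, and the uniform lower bound $g(v_n)\geq g(\delta/2)$ (from the monotonicity of $g$ forced by (\ref{PC})) together with dominated convergence. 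This limiting step --- not the divergence theorem for the horizontal vector fields, which you correctly identify as routine --- is the actual content of the paper's proof, and your proposal is missing it entirely.
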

\begin{proof}
 Since $v\in D^{1,p}(\Omega)$, we can by Meyers-Serrin Theorem, choose $v_n\in C^1(\Omega)$ such that the following holds:
 \begin{equation}\nonumber
  v_n>\frac{\delta}{2}\;\;\text{in}\;\;\Omega,\;\;
  v_n\rightarrow v\;\;\text{in}\;\;\Omega\;\;\text{and}\;\;
  v_n\rightarrow v\;\;\text{a.e}\;\;\text{in}\;\;\Omega.
 \end{equation}
 Employing Theorem \ref{PICN} with $v_n$ and $u$ we have,
 \begin{equation}\nonumber
  \int_{\Omega}R(u,v_n)=0\;\;\text{since}\;\;R(u,v_n)=0\;\;\text{a.e}\;\text{in}\;\Omega\;\;\text{and}\;\forall\;n\in\mathbf{N}.
 \end{equation}
 i.e,
 \begin{eqnarray}\nonumber
   \int_{\Omega}|\nabla u|^p\geq \int_{\Omega}\nabla\Big(\frac{|u|^p}{g(v_n)}\Big)|\nabla v_n|^{p-2}\nabla v_n
 =\int_{\Omega}\frac{|u|^p}{g(v_n)}(-\Delta_p v_n).\nonumber
 \end{eqnarray}

Note that since $-\Delta_p$ is a continuous function from $D^{1,p}(\Omega)$ to $D^{-1,p'}(\Omega)$ for $p'=\frac{p}{p-1}$,
we have $-\Delta_p v_n\rightarrow -\Delta_p v$ in $D^{1,p}(\Omega)$ and for $g$ locally lipchitz continuous in $(0,\infty)$
we have $g(v_n)\rightarrow g(v)$ pointwise.\\
Hence using Lebesgue Dominated Convergence Theorem and the fact that $g$ is increasing on $(0,\infty)$, 
we have,
\begin{equation}\nonumber
 \int_{\Omega}|\nabla u|^p\geq \int_{\Omega}\frac{|u|^p}{g(v)}(-\Delta_p v)
\end{equation}
for any $u\in C^{\infty}_c(\Omega)$. 
\end{proof}

Before we proceed with the proof of Theorem \ref{PIn}, we will state the Strong Maximum Principle from \cite{DOU} which was proved using the Harnack 
Inequality of \cite{CaDaGa}.

\begin{lemma}\label{smp}(\textbf{Strong Maximum Principle})
 Let $p>1$ and $\Omega\subset \mathcal{H}^n$ be a bounded domain and $u\in D^{1,p}_0(\Omega)$ be nonnegative solution of the following
 equation
 \begin{equation}\nonumber
  -\Delta_p u=h(x,u)\;\;\text{in}\;\;\Omega;\;u|_{\partial\Omega}=0
 \end{equation}
 where $h:\Omega\times\mathbb{R}\rightarrow\mathbb{R}$ is a measurable function such that $|h(x,u)|\leq C(u^{p-1}+1)$. 
 Then $u\equiv0$ or $u>0$ in $\Omega$.
\end{lemma}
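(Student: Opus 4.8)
The plan is to obtain the dichotomy from a connectedness argument whose engine is the Harnack inequality of \cite{CaDaGa}, thereby bypassing the missing $C^{1,\alpha}$ theory. First I would note that the De Giorgi--Nash--Moser machinery underlying that Harnack inequality already gives that a weak solution $u$ is locally bounded and locally H\"older continuous in $\Omega$; I fix this continuous representative once and for all. Then $\{u>0\}$ is relatively open and $\{u=0\}$ relatively closed in $\Omega$. Since $\Omega$ is a domain, hence connected, it is enough to show that $\{u=0\}$ is \emph{also} open: granting this, either $\{u=0\}=\Omega$, i.e. $u\equiv0$, or $\{u=0\}=\emptyset$, i.e. $u>0$ in $\Omega$, which is exactly the asserted alternative.

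For the local step I would use that $u$ is a nonnegative weak \emph{supersolution}. The growth hypothesis gives $-\Delta_p u=h(x,u)\ge -C(u^{p-1}+1)$, and since $u$ is locally bounded the function $x\mapsto h(x,u(x))$ lies in $L^\infty_{loc}(\Omega)$. Fix $x_0\in\Omega$ with $u(x_0)=0$ and choose $R>0$ with $B_{2R}(x_0)\subset\subset\Omega$. The weak Harnack inequality for nonnegative supersolutions from \cite{CaDaGa} has the shape
\begin{equation}\nonumber
 \left(\frac{1}{|B_{2R}|}\int_{B_{2R}(x_0)} u^{s}\right)^{\frac1s}\le C\left(\inf_{B_R(x_0)} u + R^{\frac{p}{p-1}}\big\|C(u^{p-1}+1)\big\|_{L^\infty(B_{2R})}^{\frac1{p-1}}\right)
\end{equation}
for some exponent $s>0$, the remainder carrying the homogeneity of $\Delta_p$. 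In the situation of \eqref{QH}, where $f>0$ and hence $h\ge0$, the function $u$ is genuinely $p$-superharmonic, the remainder drops out, and $\inf_{B_R(x_0)}u=u(x_0)=0$ forces the left-hand side to vanish, so $u\equiv0$ on $B_{2R}(x_0)$. Thus $\{u=0\}$ is open and the connectedness argument of the first step closes the proof.

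The main obstacle is that all the analytic content is concentrated in this subelliptic Harnack estimate, which must substitute for both the $C^{1,\alpha}$ estimates and V\'azquez's strong maximum principle used for the Euclidean $p$-Laplacian. Establishing, or correctly invoking, the weak Harnack inequality on $\mathcal{H}^n$ with a right-hand side that is only $L^\infty$ and need not have a definite sign is the delicate point: one must track the scaling $R^{p/(p-1)}$ of the lower-order remainder and use the continuity $\|u\|_{L^\infty(B_{2R})}\to0$ as $R\to0$ to guarantee that an interior zero of $u$ truly propagates rather than merely forcing a decay rate. It is precisely here that the hypothesis $|h(x,u)|\le C(u^{p-1}+1)$, which ensures $h(\cdot,u)\in L^\infty_{loc}$, is indispensable, and it is the sub-elliptic regularity theory of \cite{CaDaGa} that makes the whole scheme go through in the absence of $C^{1,\alpha}$ regularity.
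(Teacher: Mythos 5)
Your overall scheme --- fix the H\"older-continuous representative supplied by the subelliptic De Giorgi--Nash--Moser theory, reduce the dichotomy to openness of $\{u=0\}$ by connectedness of $\Omega$, and drive the local step with the Harnack inequality of \cite{CaDaGa} --- is exactly the route the paper points to: it gives no proof of its own, quoting the lemma from \cite{DOU}, where it is proved via that same Harnack inequality. Your treatment of the case $h\geq 0$ (the inhomogeneous remainder drops out, so $\inf_{B_R}u=0$ forces $u\equiv 0$ on the ball) is correct, and since the paper assumes $f>0$, this is the only case its applications actually use.

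However, as a proof of the lemma as stated --- $h$ of arbitrary sign with only $|h(x,u)|\leq C(u^{p-1}+1)$ --- your argument has a genuine gap at precisely the point you flag in your last paragraph, and the proposed remedy there does not close it. With an $L^\infty$ right-hand side, the weak Harnack inequality at a zero $x_0$ yields only the decay estimate $\sup_{B_R(x_0)}u\leq C R^{p/(p-1)}$, and no amount of "continuity as $R\to 0$" upgrades decay to vanishing: $R^{p/(p-1)}$ is exactly the homogeneity of nontrivial solutions with bounded inhomogeneity. Already in the Euclidean case $p=2$, the function $u(x)=|x|^2$ solves $-\Delta u=-2n$, is nonnegative, vanishes only at the origin, and satisfies $|h|\leq C(u+1)$; so $\{u=0\}$ need not be open under the two-sided bound alone, and no purely local argument can prove the dichotomy from it. (One can even arrange global Dirichlet data: $u(x)=|x|^2(1-|x|^2)^2$ on the unit ball, with $h(x,u):=-\Delta u(x)$ bounded, shows the literal statement is delicate for sign-changing $h$.) What a correct proof requires is a one-sided hypothesis: either $h\geq 0$, as in your fallback and in the paper's application to (\ref{QH}), or a V\'azquez-type condition $h(x,s)\geq -\beta(s)$ with $\beta(0)=0$, $\beta$ nondecreasing and $\int_0 (s\beta(s))^{-1/p}\,ds=\infty$, which in particular rules out $h(x,0)<0$ --- something $|h(x,u)|\leq C(u^{p-1}+1)$ does not do. So your proof is sound exactly in the regime the paper needs, but the claim that the scaling of the remainder plus continuity handles general $h$ is a step that would fail.
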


With Lemma \ref{p1} and Lemma \ref{smp} in hand we now proceed with the proof of the Theorem \ref {PIn}.

\begin{proof}
 Using the Strong Maximum Principle we have $v>0$ in $\Omega$. Denote, $v_n(x)=v(x)+\frac{1}{n},\;n\in\mathbb{N}$. 
 Thus we have the following:
 \begin{itemize}
  \item $\Delta_p v_m=\Delta_p v$.
  \item $v_n\rightarrow v$ a.e in $\Omega$ and in $D^{1,p}(\Omega)$.
  \item $g(v_n)\rightarrow g(v)$ a.e in $\Omega$.
 \end{itemize}
Hence using Lemma \ref{p1} we have for $u\in C^{\infty}_c(\Omega)$,
\begin{equation}\nonumber
\int_{\Omega}|\nabla u|^p\geq \int_{\Omega}\frac{|u|^p}{g(v)}(-\Delta_p v)
\end{equation}
Now to conclude our theorem for $u\in D_0^{1,p}(\Omega)$, we use $u_n\in C^{\infty}_0(\Omega)$ such that $u_n\rightarrow u$ in 
$D^{1,p}_0(\Omega)$. Choosing $u_n$ and $v_n$ in Lemma \ref{p1}, we have
\begin{equation}\nonumber
 \int_{\Omega}|\nabla u_n|^p\geq \int_{\Omega}\Big(\frac{|u_n|^p}{g(v_n)}\Big)(-\Delta_p v_n)
\end{equation}
Now using the fact that $g$ satisfies (\ref{PC}) we have by Fatou Lemma,
\begin{equation}\nonumber
 \int_{\Omega}|\nabla u|^p\geq \int_{\Omega}\Big(\frac{|u|^p}{g(v)}\Big)(-\Delta_p v)
\end{equation}
which concludes our proof.
\end{proof}
We will conclude this section with the D\'iaz-Sa\'a Inequality in Heisenberg Group. For this part we will be using $g(u)=u^{p-1}$.
\begin{theorem}\label{diazsaa}(\textbf{D\'iaz-Sa\'a Inequality})
Let $p>1$ and $\Omega$ be a bounded domain in $\mathcal{H}^n$. If $u_i\in D^{1,p}_0(\Omega)$ s.t $-\Delta_p u_i=\mu_i$, where $\mu_{i}>0$ is a bounded Radon 
measure with $u_i|_{\partial\Omega}=0$ and $u_i(\not\equiv 0)\geq0\;\text{a.e in}\;\Omega$ for $i=1,2$. Then we have,
\begin{equation}\nonumber
\int_{\Omega} \Big(-\frac{\Delta_p u_1}{u_1^{p-1}}+\frac{\Delta_p u_2}{u_2^{p-1}}\Big)(u_1^p-u_2^p)\geq0.
\end{equation}
\end{theorem}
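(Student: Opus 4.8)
The plan is to reduce the stated inequality to two applications of the Picone inequality (Theorem~\ref{PIn}), specialized to $g(u)=u^{p-1}$, with the roles of $u_1$ and $u_2$ interchanged. First I would use the equations $-\Delta_p u_i=\mu_i$ to rewrite the integrand purely in terms of the measures: since $-\Delta_p u_1/u_1^{p-1}=\mu_1/u_1^{p-1}$ and $\Delta_p u_2/u_2^{p-1}=-\mu_2/u_2^{p-1}$, expanding the product and regrouping gives
\[
\int_\Omega\Big(-\frac{\Delta_p u_1}{u_1^{p-1}}+\frac{\Delta_p u_2}{u_2^{p-1}}\Big)(u_1^p-u_2^p)
=\Big[\int_\Omega u_1\,d\mu_1-\int_\Omega\frac{u_1^p}{u_2^{p-1}}\,d\mu_2\Big]
+\Big[\int_\Omega u_2\,d\mu_2-\int_\Omega\frac{u_2^p}{u_1^{p-1}}\,d\mu_1\Big].
\]
Hence it suffices to show that each bracketed term is nonnegative.

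Before doing so, I would invoke the Strong Maximum Principle (Lemma~\ref{smp}) to conclude $u_1,u_2>0$ in $\Omega$, so that the quotients appearing above are well defined a.e.\ and the regrouping (which uses $u_i\,d\mu_i=(u_i^p/u_i^{p-1})\,d\mu_i$) is legitimate.

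The two ingredients for the first bracket are an energy identity and the Picone inequality. Since $-\Delta_p u_1=\mu_1$ holds weakly and $u_1\in D^{1,p}_0(\Omega)$, testing the equation against $u_1$ itself yields the energy identity $\int_\Omega|\nabla u_1|^p=\int_\Omega u_1\,d\mu_1$. On the other hand, applying Theorem~\ref{PIn} with the pair $(u,v)=(u_1,u_2)$ gives
\[
\int_\Omega|\nabla u_1|^p\ \geq\ \int_\Omega\frac{u_1^p}{u_2^{p-1}}\,(-\Delta_p u_2)
=\int_\Omega\frac{u_1^p}{u_2^{p-1}}\,d\mu_2 .
\]
Combining the two shows the first bracket is $\geq0$. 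The second bracket is handled identically, interchanging the indices $1\leftrightarrow 2$, i.e.\ using the energy identity for $u_2$ together with Theorem~\ref{PIn} applied to $(u,v)=(u_2,u_1)$. Summing the two nonnegative brackets yields the claimed inequality.

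The step I expect to be the main obstacle is the energy identity $\int_\Omega|\nabla u_i|^p=\int_\Omega u_i\,d\mu_i$: because $\mu_i$ is only a bounded Radon measure and $u_i$ merely belongs to $D^{1,p}_0(\Omega)$, the function $u_i$ is not a priori an admissible test function, so one must approximate $u_i$ by $C^\infty_c(\Omega)$ functions and pass to the limit, taking care that $u_i$ is $\mu_i$-integrable. A related point to verify is that the integrals $\int_\Omega u_1^p/u_2^{p-1}\,d\mu_2$ and $\int_\Omega u_2^p/u_1^{p-1}\,d\mu_1$ are finite; this follows a posteriori from the Picone inequality, since the left-hand sides $\int_\Omega|\nabla u_i|^p$ are finite.
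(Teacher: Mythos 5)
Your proposal is correct and follows essentially the same route as the paper: the paper likewise applies Theorem \ref{PIn} twice with the roles of $u_1$ and $u_2$ interchanged, converts $\int_\Omega|\nabla u_i|^p$ into $\int_\Omega\bigl(-\Delta_p u_i/u_i^{p-1}\bigr)u_i^p$ by testing the equation against $u_i$ (its ``integration by parts'' step, your energy identity), and adds the two resulting inequalities. Your extra care about the admissibility of $u_i$ as a test function against the measure $\mu_i$ and the finiteness of the cross terms addresses points the paper's proof passes over silently, but it does not change the argument.
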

\begin{remark}
Note that above theorem is not true for a general $g$ satisfying (\ref{PC}).
\end{remark}

\begin{proof}
Choosing $u_i$ for $i=1,2$ satisfying the hypothesis of Theorem \ref{diazsaa} and then plugging the tuple $(u_1,u_2)$ into Theorem \ref{PIn} we get
\begin{equation}
 \nonumber \int_{\Omega} |\nabla u_1|^p\geq \int_{\Omega}\Big(-\frac{\Delta_p u_2}{u_2^{p-1}}\Big)u_1^p
 \end{equation}
Using Integration by Parts on right part, we have
\begin{equation}\label{df}
 \int_{\Omega}\Big(-\frac{\Delta_p u_1}{u_1^{p-1}}+\frac{\Delta_p u_2}{u_2^{p-1}}\Big)u_1^p\geq0.
\end{equation}

Now interchanging the tuple $(u_1,u_2)$ into $(u_2,u_1)$ in Theorem \ref{PIn} we get,
\begin{equation}
 \nonumber \int_{\Omega} |\nabla u_2|^p\geq \int_{\Omega}\Big(-\frac{\Delta_p u_1}{u_1^{p-1}}\Big)u_2^p
 \end{equation}
Again using Integration by Parts on the right part we have,
\begin{equation}\label{dfg}
 \int_{\Omega}\Big(-\frac{\Delta_p u_2}{u_2^{p-1}}+\frac{\Delta_p u_1}{u_1^{p-1}}\Big)u_2^p\geq0.
\end{equation}
Adding (\ref{df}) and (\ref{dfg}) we have,
\begin{equation}\nonumber
 \int_{\Omega}\Big(-\frac{\Delta_p u_1}{u_1^{p-1}}+\frac{\Delta_p u_2}{u_2^{p-1}}\Big)(u_1^p-u_2^p)\geq0.
\end{equation}
Hence proved.
\end{proof}

\section{Main Results}
In this section of the paper we will state and proof our main result. 
\begin{theorem}\label{uni}(\textbf{Uniqueness of Solution})
There exists at most one positive weak solution to equation (\ref{PC}) in $D^{1,p}_0(\Omega)$.
\end{theorem}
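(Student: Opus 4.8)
The plan is to take two positive weak solutions $u_1$ and $u_2$ of (\ref{QH}) and show they must coincide by feeding the pair into the D\'iaz-Sa\'a Inequality of Theorem \ref{diazsaa}. First I would record that each $u_i$ satisfies $-\Delta_p u_i = f(x,u_i)$ weakly with $u_i\geq 0$ and $u_i\not\equiv 0$. Setting $\mu_i := f(x,u_i)$, the growth hypothesis (III) together with $u_i\in D^{1,p}_0(\Omega)\subset L^p(\Omega)$ gives $0<\mu_i\leq C(u_i^{p-1}+1)\in L^1(\Omega)$, so each $\mu_i$ is a positive bounded Radon measure; this is precisely the admissibility required to invoke Theorem \ref{diazsaa}. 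Next, since $f$ obeys the bound in (III), Lemma \ref{smp} applies with $h(x,u)=f(x,u)$, and because $u_i\not\equiv 0$ it yields $u_i>0$ in $\Omega$. This strict positivity is what legitimizes dividing by $u_i^{p-1}$ in the sequel.

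With the hypotheses of Theorem \ref{diazsaa} verified, I would apply it to the pair $(u_1,u_2)$ and substitute $-\Delta_p u_i = f(x,u_i)$ into its conclusion, rewriting it as
\begin{equation}\nonumber
\int_{\Omega}\left(\frac{f(x,u_1)}{u_1^{p-1}}-\frac{f(x,u_2)}{u_2^{p-1}}\right)(u_1^p-u_2^p)\geq 0.
\end{equation}

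The heart of the argument is then an elementary pointwise sign analysis of the integrand. Fixing $x$ and writing $a=u_1(x)$, $b=u_2(x)$, both positive, the map $r\mapsto r^p$ is strictly increasing, so $a^p-b^p$ has the same sign as $a-b$, while hypothesis (II) forces $r\mapsto f(x,r)/r^{p-1}$ to be strictly decreasing, so $\frac{f(x,a)}{a^{p-1}}-\frac{f(x,b)}{b^{p-1}}$ has the opposite sign to $a-b$. Hence the integrand is $\leq 0$ everywhere and is strictly negative wherever $a\neq b$. Combined with the displayed inequality $\geq 0$, the integral must vanish, forcing the integrand to be $0$ a.e.; since it is strictly negative off the set $\{u_1=u_2\}$, we conclude $u_1=u_2$ a.e. in $\Omega$, which is the asserted uniqueness.

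I expect the only real obstacle to be bookkeeping rather than substance: confirming that $\mu_i=f(x,u_i)$ genuinely meets the ``positive bounded Radon measure'' hypothesis of Theorem \ref{diazsaa} (which rests on (III) and the embedding into $L^p$), and that Lemma \ref{smp} can be invoked so that the quotients $f(x,u_i)/u_i^{p-1}$ are well defined a.e. Once these positivity and integrability points are secured, the strict monotonicity in (II) closes the argument at once.
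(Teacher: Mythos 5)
Your proposal is correct and follows essentially the same route as the paper: strict positivity via Lemma \ref{smp}, then Theorem \ref{diazsaa} applied to the pair of solutions with $-\Delta_p u_i=f(x,u_i)$, played off against the strict monotonicity hypothesis (II). If anything, your version is slightly more careful than the paper's one-line contradiction --- you verify that $\mu_i=f(x,u_i)$ is an admissible positive bounded Radon measure via (III), and your ``integral vanishes, hence integrand zero a.e., hence $u_1=u_2$ a.e.'' phrasing handles the measure-zero bookkeeping that the paper's ``for $u\neq v$ the integral is $<0$'' glosses over.
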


\begin{proof}
 Let $u$ and $v$ be two non-positive solutions of equation (\ref{QH}). Then using Lemma \ref{smp} we have, $u,v>0$ in $\Omega$.
 Moreover since $f(x,u)$ is positive and satisfy hypothesis (I), we have for $u\neq v$,
\begin{equation}\nonumber
0\geq\int_{\Omega}\Big(-\frac{\Delta_p u}{u^{p-1}}+\frac{\Delta_p v}{v^{p-1}}\Big)(u^p-v^p)=\int_{\Omega}\Big(\frac{f(x,u)}{u^{p-1}}-
\frac{f(x,v)}{v^{p-1}}\Big)(u^p-v^p)<0.
\end{equation}
Hence we arrive at a contradiction.
\end{proof}

We now proceed to the proof of existence of solution in $D^{1,p}_0(\Omega)$. The energy functional corresponding to equation (\ref{QH})
as 
\begin{equation}\label{ef}
\nonumber
E(u)=\frac{1}{p}\int_{\Omega}|\nabla u|^p-\int_{\Omega}F(x,u)\;\mbox{with}\;u\in D^{1,p}_0(\Omega)
\end{equation}

where $F(x,u)=\int_{0}^{u}f(x,t)dt$.

\begin{theorem}\label{exi}(\textbf{Existence of Solution})
 The equation (\ref{PC}) admits a solution if the following two conditions hold simultaneously:
 \begin{eqnarray}
 \lambda_1(-\Delta_p v-a_o |v|^{p-2}v)<0\;\text{with}\;a_o(x)=\lim_{r\searrow 0} \frac{f(x,r)}{r^{p-1}}\\
 \lambda_1(-\Delta_p v-a_{\infty} |v|^{p-2}v)>0\;\text{with}\;a_{\infty}(x)=\lim_{r\nearrow \infty} \frac{f(x,r)}{r^{p-1}}.
 \end{eqnarray}
\end{theorem}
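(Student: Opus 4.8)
The plan is to produce the solution as a minimizer of the energy functional $E$ associated with (\ref{QH}) via the direct method of the calculus of variations, and then to upgrade that minimizer to a positive weak solution using the Strong Maximum Principle (Lemma \ref{smp}). Throughout I interpret $\lambda_1(-\Delta_p-a)$ as the principal eigenvalue
\begin{equation}\nonumber
\lambda_1(-\Delta_p-a)=\inf\Big\{\int_{\Omega}|\nabla w|^p-\int_{\Omega}a|w|^p:\ w\in D^{1,p}_0(\Omega),\ \int_{\Omega}|w|^p=1\Big\}.
\end{equation}
By hypothesis (II) the ratio $f(x,r)/r^{p-1}$ is strictly decreasing, so $a_\infty(x)\le f(x,r)/r^{p-1}\le a_0(x)$ for all $r>0$; these two bounds are precisely what will drive coercivity (via $a_\infty$) and nontriviality (via $a_0$). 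Since $f$ is defined and positive only on $[0,\infty)$, I first replace $f(x,r)$ by $f(x,r^+)$, so that $F(x,u)=\int_0^{u^+}f(x,t)\,dt$; as $|\nabla|u||=|\nabla u|$ a.e. and $F(x,u)=F(x,u^+)$, it suffices to minimize over the nonnegative cone, and any minimizer may be taken with $u\ge0$.

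First I would establish that $E$ is coercive on $D^{1,p}_0(\Omega)$. The hypothesis $\lambda_1(-\Delta_p-a_\infty)>0$ yields, through a standard compactness argument based on the compact embedding $D^{1,p}_0(\Omega)\hookrightarrow L^p(\Omega)$, a constant $\alpha>0$ with
\begin{equation}\nonumber
\int_{\Omega}|\nabla w|^p-\int_{\Omega}(a_\infty+\epsilon)|w|^p\ge\alpha\int_{\Omega}|\nabla w|^p\quad\text{for all }w\in D^{1,p}_0(\Omega),
\end{equation}
valid for $\epsilon>0$ small, since $\lambda_1$ depends continuously on the weight. Combining this with the elementary bound $F(x,u)\le\frac{a_\infty(x)+\epsilon}{p}u^p+C_\epsilon$ (a consequence of $f(x,r)/r^{p-1}\to a_\infty$ as $r\nearrow\infty$ together with the growth condition (III)) gives $E(u)\ge\frac{\alpha}{p}\|u\|_{D^{1,p}_0}^p-C_\epsilon|\Omega|$, hence coercivity. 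Weak lower semicontinuity is routine: the Dirichlet term is convex and strongly continuous, hence weakly lower semicontinuous, while $u\mapsto\int_{\Omega}F(x,u)$ is weakly continuous because weak convergence in $D^{1,p}_0(\Omega)$ forces strong $L^p$ convergence by compact embedding and $F$ has $p$-growth controlled by (III). The direct method then delivers a minimizer $u_0\ge0$.

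Next I would show the minimizer is nontrivial, which is where $\lambda_1(-\Delta_p-a_0)<0$ enters. Let $\phi>0$ be a first eigenfunction for this principal eigenvalue, so that $\int_{\Omega}|\nabla\phi|^p-\int_{\Omega}a_0|\phi|^p=\lambda_1(-\Delta_p-a_0)\int_{\Omega}|\phi|^p<0$. Since $f(x,r)/r^{p-1}\to a_0(x)$ as $r\searrow0$, for every small $\epsilon>0$ one has $F(x,t\phi)\ge\frac{a_0(x)-\epsilon}{p}(t\phi)^p$ for $t$ small, whence
\begin{equation}\nonumber
E(t\phi)\le\frac{t^p}{p}\Big(\int_{\Omega}|\nabla\phi|^p-\int_{\Omega}(a_0-\epsilon)|\phi|^p\Big)+o(t^p),
\end{equation}
and choosing $\epsilon$ small keeps the bracket negative; thus $E(t\phi)<0=E(0)$ for small $t>0$, forcing $u_0\not\equiv0$. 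Finally, as a minimizer $u_0$ is a critical point of $E$, hence a weak solution of $-\Delta_p u_0=f(x,u_0)$ (the $C^1$ character of $E$ being supplied by (III)); and since $h(x,u_0)=f(x,u_0)$ satisfies $|h|\le C(u_0^{p-1}+1)$, Lemma \ref{smp} applied to the nonnegative, nontrivial $u_0$ yields $u_0>0$ in $\Omega$, completing the existence proof.

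The main obstacle I anticipate is the coercivity step, specifically the passage from the sign condition $\lambda_1(-\Delta_p-a_\infty)>0$ to the uniform equivalent-norm estimate with the shifted weight $a_\infty+\epsilon$: one must justify both the continuity of the principal eigenvalue in the weight and the compactness argument producing $\alpha$, while simultaneously controlling the remainder in $F(x,u)\le\frac{a_\infty+\epsilon}{p}u^p+C_\epsilon$ \emph{uniformly} in $x$, for which hypotheses (I) and (III) must be used in tandem. A secondary technical point is the differentiability of $E$ needed to extract the Euler--Lagrange equation, which is exactly what the growth hypothesis (III) is tailored to guarantee.
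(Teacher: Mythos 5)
Your proposal is correct and follows essentially the same route as the paper: the paper gives no independent argument but simply invokes Th\'eor\`eme 2 of D\'iaz--Sa\'a \cite{DiSa}, whose proof is exactly the direct-method scheme you describe (coercivity from $\lambda_1(-\Delta_p-a_\infty)>0$, weak lower semicontinuity, nontriviality of the minimizer via a test function built from $\lambda_1(-\Delta_p-a_0)<0$, then positivity via the strong maximum principle, Lemma \ref{smp}). The one wrinkle you flag --- the possible non-uniformity in $x$ of the bound $F(x,u)\leq\frac{a_\infty(x)+\epsilon}{p}u^p+C_\epsilon$ --- is handled in \cite{DiSa} by running the coercivity step as a contradiction argument with Fatou's lemma along a normalized unbounded sequence rather than through a pointwise uniform constant, and with that substitution your outline matches the cited proof.
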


Using the exact same proof of \cite{DiSa} (Th\'eor\'eme 2), one can show the existence of the minimizer to $E(u)$ in $D^{1,p}_0(\Omega)$
and hence Theorem \ref{exi}.   

\section*{Comments}
We conclude this article with a few comments:
\begin{itemize}
 \item It is worth mentioning again that the methods used in Di\'az-Sa\'a \cite{DiSa} for proving the existence and uniqueness of equation (\ref{QH}) can't be used here due to non-availability
of the $C^{1,\alpha}$ regularity of the $p$-sub-Laplacian in Heisenberg Group for all $p>1$, so we have used the Picone Inequality to bypass the problem but in doing so
we are forced to put the positivity condition on $f$, which was not present in the assumptions of \cite{DiSa}.\\
It will be interesting to know if one can conclude the same results for uniqueness without the positivity condition on $f$.
\item The statements proved in Theorem \ref{PICN} and Theorem \ref{PIn} were valid for a wide range of functions satisfies (\ref{PC}). This results are
 new in the contexts of Heisenberg group and one can actually obtain Nonexistence results and Comparison Principles for p-sub-Laplacian
similar to those in \cite{KB} and the reference therein.
\end{itemize}

\subsection*{Acknowledgements} 
The work has been carried under the project INSPIRE Faculty Award [IFA-13 MA-29].

\end{document}